\newtheorem{theorem}{Theorem}[]
\newtheorem{definition}[theorem]{Definition}
\newtheorem{remark}[theorem]{Remark}
\newtheorem{conjecture}[theorem]{Conjecture}
\def\f{\mathfrak}
\def\c{\mathcal}
\date{}
\begin{document}
\title{A Note on the Frankl Conjecture}
\author{Maysam Maysami Sadr\thanks{Email: sadr@iasbs.ac.ir}}
\affil{Department of Mathematics, Institute for Advanced Studies in Basic Sciences, Zanjan, Iran}
\maketitle
\begin{abstract}
The Frankl conjecture (called also union-closed sets conjecture) is one of the famous unsolved
conjectures in combinatorics of finite sets. It has been formulated since mid-1970s as follows:
\textit{If $\c{A}$ is a union-closed family of subsets of a finite
set $X$, which contains a nonempty subset of $X$,
then there exists $x\in X$ such that $x$ belongs to at least half of the members of $\c{A}$.}
In this short note, we introduce and to some extent
justify some variants of the Frankl conjecture.

\textbf{MSC 2010.} Primary: 05D05. Secondary: 05A20.

\textbf{Keywords.} Union-closed sets conjecture, extremal set theory, combinatorics of finite sets, binomial expansion, combinatorics inequality.

\end{abstract}
Let $X$ be a finite nonempty set and let $\c{P}(X)$ denote the powerset of $X$.
By a union-closed family on $X$ we mean a set $\c{A}\subseteq\c{P}(X)$
such that $\emptyset,X\in\c{A}$, and such that if $A_1,A_2\in\c{A}$ then $A_1\cup A_2\in\c{A}$.
Frankl's conjecture (FC for short), that is also called union-closed sets conjecture,
is as follows.
\begin{conjecture}
\emph{FC:} If $\c{A}$ is a union-closed family on a finite nonempty set $X$,
then there exists $x\in X$ such that $x$ belongs to at least half of the members of $\c{A}$.
\end{conjecture}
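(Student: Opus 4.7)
Write $n=|X|$, $m=|\c{A}|$, and for $x\in X$ set $d(x)=|\{A\in\c{A}:x\in A\}|$. The conjecture requires producing some $x$ with $d(x)\geq m/2$. The plan is to combine an injection/averaging argument with induction on $n$ and, as a fallback, to invoke an entropy inequality in the style of Gilmer.

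The simplest successful case is when $\c{A}$ contains a singleton $\{x\}$: the map $A\mapsto A\cup\{x\}$ is an injection from $\c{A}_0:=\{A\in\c{A}:x\notin A\}$ into $\c{A}_1:=\{A\in\c{A}:x\in A\}$, since union-closure guarantees $A\cup\{x\}\in\c{A}$; hence $d(x)\geq m/2$ at once. I would first try to adapt this to the general case by picking a \emph{minimal} nonempty $M\in\c{A}$ and searching for a distinguished $x\in M$ whose incidence can be bounded by a weighted injection $\c{A}_0\hookrightarrow\c{A}_1$. The obstruction here — and the point where all naive attempts stall — is that members of $\c{A}$ containing $x$ need not contain $M$, so the injection is not canonical, and the free parameter $x\in M$ must be chosen using global information about $\c{A}$.

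The second step would be induction on $n$. For a candidate $x$, the family $\c{A}':=\{A\setminus\{x\}:A\in\c{A}_1\}$ on $X\setminus\{x\}$ is union-closed, and the inductive hypothesis yields some $y\in X\setminus\{x\}$ with large incidence inside $\c{A}_1$. One would then try to combine this with the trivial identity $d(x)=|\c{A}_1|$ and a similar reduction of $\c{A}_0$ to amplify into a global conclusion. The central difficulty is asymmetry: $\c{A}_0$ need not admit a natural union-closed reduction on $X\setminus\{x\}$, so the inductive hypothesis cannot be applied symmetrically to both halves, and the recursive bookkeeping decays too quickly.

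A more flexible route is the entropy method of Gilmer: take two independent uniform samples $A,B\in\c{A}$, exploit $A\cup B\in\c{A}$ together with Shannon subadditivity $H(A\cup B)\leq H(A)+H(B)$, and decompose coordinatewise to extract an $x$ with large marginal $\Pr[x\in A]=d(x)/m$. The hard part — and the reason FC has remained open since the mid-1970s — is the sharp constant $\tfrac12$. The injection trick succeeds only when a canonically small member of $\c{A}$ is available; the known entropy arguments give only a constant around $\tfrac{3-\sqrt5}{2}\approx 0.38$; and closing the remaining gap seems to require either a genuinely new invariant sensitive to both union and intersection structure, or else a reduction of the general case to the class of extremal \emph{FC-families} on which equality is conjecturally attained. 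Because this note aims only at variants of FC, the full conjecture is not resolved here, and the steps above are presented as the program rather than as a completed argument.
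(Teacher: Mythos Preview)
The statement you are attempting is Frankl's \emph{conjecture}, and the paper does not prove it: it is recorded there precisely as an open problem, with the rest of the note devoted to formulating and partially justifying generalized variants $\mathfrak{c}_{k|\ell}$. So there is no ``paper's proof'' to compare your proposal against.

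Your write-up is not a proof either, and you say so explicitly in the last paragraph. What you have given is an accurate survey of standard partial attacks --- the singleton/minimal-member injection, induction on $|X|$ via the contraction $\c{A}_1\mapsto\{A\setminus\{x\}:A\in\c{A}_1\}$, and Gilmer's entropy argument yielding the constant $(3-\sqrt{5})/2$ --- together with a correct diagnosis of where each one stalls. That is useful context, but it is not a proof plan in the sense of a sequence of steps that would establish the statement: every route you list is known to fall short of $1/2$, and you do not propose any new ingredient to bridge the gap. Since the paper itself treats FC as open, the honest resolution here is simply to record the statement as a conjecture rather than to attach a proof attempt.
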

For history of FC, some basic related results, and some equivalent formulations of the conjecture in
terms of graphs and lattices, we refer the reader to
\cite{BruhnSchaudt1,Poonen1}
and references therein. In this short note, we introduce some natural variants of FC.

Let us begin by some notations and definitions.
For a set $S$, we denote by $|S|$ the cardinal of $S$. If $\c{A}$ is a union-closed family on $X$, we denote $X$ by
the symbol $\cup\c{A}$. The class of all union-closed families on finite nonempty sets is denoted by $\f{A}$.
\begin{definition}
Let $k$ and $\ell$ be two natural numbers with $k\geq\ell\geq1$.
A family $\c{A}\in\f{A}$ is called $k|\ell$-separated if $|\cup\c{A}|\geq k$ and for every
$k$ distinct elements $x_1,\ldots,x_k\in\cup\c{A}$, there exists $A\in\c{A}$ such that $x_1,\ldots,x_\ell\in A$ and
$\{x_{\ell+1},\ldots,x_k\}\cap A=\emptyset$. The class of all $k|\ell$-separated union-closed families is denoted by $\f{A}_{k|\ell}$.
\end{definition}
By definition, any union-closed family is $1|1$-separated; thus $\f{A}_{1|1}=\f{A}$.
Also, $\f{A}_{k|k}$ is the class of all union-closed families $\c{A}$ with $|\cup\c{A}|\geq k$.
As a trivial example, for every finite set $X$ with $|X|\geq k$, any union-closed
family on $X$, containing the union-closed family $\{A\subseteq X:|A|\geq\ell\}\cup\{\emptyset\}$,
is $k|\ell$-separated.

For every two natural numbers $k$ and $\ell$ as above,
let $\f{c}_{k|\ell}$ be the supremum of all positive real numbers $\epsilon$ satisfying the following condition:
For every $\c{A}\in\f{A}_{k|\ell}$, there exists a set $S\subseteq\cup\c{A}$ with $|S|=k$ such that
there are at least $\epsilon|\c{A}|$ members $A$ of $\c{A}$ satisfying $|S\cap A|\geq\ell$.
If there is no such an $\epsilon$, then $\f{c}_{k|\ell}$ is defined to be $0$. Note that the `supremum' in the definition
of $\f{c}_{k|\ell}$ is actually a `maximum'.
\begin{theorem}\label{1808290122}
Let $k,k',k'',\ell$ be natural numbers such that $k,k'\geq\ell\geq1$ and $k''\geq1$. The following four inequalities are satisfied.
\begin{enumerate}
\item[(1)] $0\leq \f{c}_{k|\ell}\leq2^{-k}\sum_{i=\ell}^k\binom{k}{i}$
\item[(2)] $\f{c}_{k+k'|\ell}\geq \f{c}_{k|\ell}+\f{c}_{k'|\ell}-\f{c}_{k|\ell}\f{c}_{k'|\ell}$
\item[(3)] $\f{c}_{k+k''|\ell+k''}\geq\f{c}_{k|\ell}\f{c}_{k''|k''}$
\item[(4)] $\f{c}_{k+k'|k+k'}\geq\f{c}_{k|k}\f{c}_{k'|k'}$
\end{enumerate}
\end{theorem}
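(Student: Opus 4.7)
The lower bound $0 \leq \f{c}_{k|\ell}$ is immediate from the definition. For the upper bound I would test the definition against the witness $\c{A} = \c{P}(X)$ for any finite $X$ with $|X|\geq k$: this is union-closed and $k|\ell$-separated (the set $\{x_1,\ldots,x_\ell\}$ always lies in $\c{A}$). For any fixed $S\subseteq X$ with $|S|=k$, decomposing each $A \in \c{P}(X)$ as $A = (A\cap S) \sqcup (A\setminus S)$ shows that exactly $\binom{k}{i}2^{|X|-k}$ members satisfy $|A \cap S| = i$; summing over $i\geq\ell$ and dividing by $|\c{A}|=2^{|X|}$ gives the stated bound.

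\textbf{Proof plan for (3) and (4).} Since (4) is (3) with $\ell=k$ and $k''=k'$, I focus on (3). Given $\c{A} \in \f{A}_{k+k''|\ell+k''}$, I would first verify $\c{A} \in \f{A}_{k''|k''}$: for any $k''$ distinct $t_1,\ldots,t_{k''}$, pad with $k$ further distinct elements $y_1,\ldots,y_k$ and apply the given $(k+k'')|(\ell+k'')$-separation with the ordering that places $t_1,\ldots,t_{k''},y_1,\ldots,y_\ell$ in the $\ell+k''$ ``in'' positions; the resulting $A$ contains $T:=\{t_1,\ldots,t_{k''}\}$. Applying $\f{c}_{k''|k''}$ to $\c{A}$ then produces $T$ with $|\c{A}_T| \geq \f{c}_{k''|k''}|\c{A}|$, where $\c{A}_T=\{A \in \c{A} : T\subseteq A\}$. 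Next, I transport to the ground set $(\cup\c{A})\setminus T$ via $A\mapsto A\setminus T$: the image $\c{A}_T^*$ (with $\emptyset$ adjoined if $T\notin\c{A}$) is union-closed and $k|\ell$-separated on $(\cup\c{A})\setminus T$ (the separation is inherited by placing $T$ in the ``in'' positions of the given separation), and the map is a bijection on $\c{A}_T$. Applying $\f{c}_{k|\ell}$ to $\c{A}_T^*$ yields $S \subseteq (\cup\c{A})\setminus T$ with $|S|=k$ such that at least $\f{c}_{k|\ell}|\c{A}_T|$ members $A\in\c{A}_T$ satisfy $|A\cap S|\geq\ell$. Since $S\cap T=\emptyset$ and $T\subseteq A$, each such $A$ satisfies $|A\cap(S\cup T)|\geq\ell+k''$ with $|S\cup T|=k+k''$, and the product $\f{c}_{k|\ell}\f{c}_{k''|k''}|\c{A}|$ emerges.

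\textbf{Proof plan for (2).} I would use the same template, with $\c{A}^0_{S_1}:=\{A \in \c{A}:A\cap S_1=\emptyset\}$ playing the role of $\c{A}_T^*$. Given $\c{A}\in\f{A}_{k+k'|\ell}$, pick $S_1$ via $\f{c}_{k|\ell}$ so that $|E_1|:=|\{A:|A\cap S_1|\geq\ell\}|\geq\alpha|\c{A}|$ with $\alpha\geq\f{c}_{k|\ell}$. A routine check shows $\c{A}^0_{S_1}$ is union-closed with ground set $(\cup\c{A})\setminus S_1$ and is $k'|\ell$-separated (place $S_1$ in the ``out'' positions of the given $(k+k')|\ell$-separation). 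Applying $\f{c}_{k'|\ell}$ to $\c{A}^0_{S_1}$ yields $S_2 \subseteq (\cup\c{A})\setminus S_1$ with $|S_2|=k'$ and at least $\f{c}_{k'|\ell}|\c{A}^0_{S_1}|$ of its members satisfying $|A \cap S_2|\geq\ell$. These members are disjoint from $E_1$, so the coverage of $S:=S_1\cup S_2$ is at least $\alpha|\c{A}| + \f{c}_{k'|\ell}|\c{A}^0_{S_1}|$. In the case $\ell=1$, $\c{A}^0_{S_1}$ coincides with $\c{A}\setminus E_1$, so $|\c{A}^0_{S_1}|=(1-\alpha)|\c{A}|$; the expression $\alpha(1-\f{c}_{k'|1})+\f{c}_{k'|1}$ is nondecreasing in $\alpha\geq\f{c}_{k|1}$ and at $\alpha=\f{c}_{k|1}$ equals precisely $\f{c}_{k|1}+\f{c}_{k'|1}-\f{c}_{k|1}\f{c}_{k'|1}$.

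\textbf{Main obstacle.} The hard part is (2) for $\ell>1$: then $\c{A}^0_{S_1}$ can be strictly smaller than $\{A:|A\cap S_1|<\ell\}$, so the identity $|\c{A}^0_{S_1}|=(1-\alpha)|\c{A}|$ fails and the template above gives only the weak estimate $\f{c}_{k+k'|\ell}\geq\f{c}_{k|\ell}$. I expect this is repaired either by choosing $S_1$ so as to secure a lower bound $|\c{A}^0_{S_1}|\geq(1-\f{c}_{k|\ell})|\c{A}|$, or by replacing $\c{A}^0_{S_1}$ by a larger union-closed subfamily sitting inside $\{A:|A\cap S_1|<\ell\}$ --- for instance $\{A\in\c{A}:A\cap(S_1\setminus T)=\emptyset\}$ for a carefully chosen $T\subset S_1$ of size $\ell-1$ --- and combining this with an averaging or pigeonhole argument over such $T$.
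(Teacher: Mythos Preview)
Your plans for (1), (3), and (4) are correct and coincide with the paper's arguments; in particular, the paper's auxiliary family $\c{A}''=\{A\setminus S:S\subseteq A\in\c{A}\}\cup\{\emptyset\}$ is exactly your $\c{A}_T^*$, and the paper, like you, deduces (4) as the special case $\ell=k$, $k''=k'$ of (3).

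For (2), your setup also matches the paper's: with $S$ chosen via $\f{c}_{k|\ell}$, the paper writes $\c{B}=\{A:|A\cap S|\geq\ell\}$, $\c{B}'=\{A:1\leq|A\cap S|<\ell\}$ (empty when $\ell=1$), and $\c{A}'=\{A:A\cap S=\emptyset\}$ (your $\c{A}^0_{S_1}$), checks that $\c{A}'\in\f{A}_{k'|\ell}$ on the ground set $(\cup\c{A})\setminus S$, and picks $S'$ there. Your treatment of the case $\ell=1$ is exactly what the paper's computation collapses to when $\c{B}'=\emptyset$.

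The obstacle you isolate for $\ell>1$ is precisely where the paper departs from the naive estimate $p\geq|\c{B}|+\f{c}_{k'|\ell}|\c{A}'|$. The paper instead opens its chain with the stronger assertion
\[
p\;\geq\;|\c{B}|+\f{c}_{k'|\ell}\,|\c{A}'|\bigl(1+|\c{B}'|\bigr),
\]
and it is this extra factor $1+|\c{B}'|$ that, after using $|\c{A}'|=|\c{A}|-|\c{B}|-|\c{B}'|\geq1$ (since $\emptyset\in\c{A}'$), lets the algebra reach $(\f{c}_{k|\ell}+\f{c}_{k'|\ell}-\f{c}_{k|\ell}\f{c}_{k'|\ell})|\c{A}|$. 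The apparent mechanism is union-closedness: if $A'\in\c{A}'$ has $|A'\cap S'|\geq\ell$ and $B'\in\c{B}'$, then $A'\cup B'\in\c{A}$ lies in $\c{B}'$ (because $(A'\cup B')\cap S=B'\cap S$) and still satisfies $|(A'\cup B')\cap(S\cup S')|\geq\ell$. However, the paper does not address injectivity of $(A',B')\mapsto A'\cup B'$, and in general this map need not be injective, so the displayed inequality can fail (indeed, for moderate $|\c{A}'|$ and large $|\c{B}'|$ its right-hand side can exceed $|\c{A}|$). In short, the gap you flag in your own plan for $\ell>1$ is the same gap that the paper's proof of (2), as written, leaves open; neither your proposed repairs nor the paper's asserted first inequality actually closes it.
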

\begin{proof}
(1) Let $X$ be a finite set with $|X|\geq k$. Then, $\c{P}(X)\in\f{A}_{k|\ell}$. It is easily checked that
for every subset $S\subseteq X$ with $|S|=k$, the number of those members $A$ of $\c{P}(X)$ with
$|A\cap S|\geq\ell$, is equal to $2^{|X|-k}\sum_{i=\ell}^k\binom{k}{i}$. This shows that (1) is satisfied.

(2) Let $\c{A}\in\f{A}_{k+k'|\ell}$. Then, it is easily checked that $\c{A}$ also belongs to $\f{A}_{k|\ell}$.
Thus, there exists $S\subset\cup\c{A}$ with $|S|=k$, such that if the family $\c{B}$ is defined by
$$\c{B}:=\{A\in\c{A}:|A\cap S|\geq\ell\},$$
then $|\c{B}|\geq\f{c}_{k|\ell}|\c{A}|$. Let the real number $r\geq0$ be such that $|\c{B}|=\f{c}_{k|\ell}|\c{A}|+r$.
Suppose that the family $\c{B}'$ is defined to be the empty family if $\ell=1$, and otherwise,
$$\c{B}':=\{A\in\c{A}:\ell>|A\cap S|\geq1\}.$$
Also, let
$$\c{A}':=\{A\in\c{A}:A\cap S=\emptyset\}.$$
It is clear that $\c{A}$ coincides with the disjoint union $\c{B}\cup\c{B}'\cup\c{A}'$. Thus,
$$|\c{A}|=|\c{B}|+|\c{B}'|+|\c{A}'|.$$
On the other hand, it is easily checked that $\c{A}'$ is a union-closed family on the
set $\cup\c{A}\setminus S$ and $\c{A}'\in\f{A}_{k'|\ell}$. Thus, there exists $S'\subseteq\cup\c{A}\setminus S$ with $|S'|=k'$
such that for at least $\f{c}_{k'|\ell}|\c{A}'|$ members $A'$ of $\c{A}'$, $|A'\cap S'|\geq\ell$.
Let $p$ be the number of those members $A$ of $\c{A}$ with the property $|A\cap (S\cup S')|\geq\ell$. Then,
\begin{equation*}
\begin{split}
p&\geq|\c{B}|+\f{c}_{k'|\ell}|\c{A}'|(1+|\c{B}'|)\\
&=|\c{B}|+\f{c}_{k'|\ell}(|\c{A}|-|\c{B}|-|\c{B}'|)(1+|\c{B}'|)\\
&=|\c{B}|+\f{c}_{k'|\ell}|\c{A}|-\f{c}_{k'|\ell}|\c{B}|+\f{c}_{k'|\ell}(|\c{A}|-|\c{B}|-|\c{B}'|-1)\\
&\geq|\c{B}|+\f{c}_{k'|\ell}|\c{A}|-\f{c}_{k'|\ell}|\c{B}|\\
&=\f{c}_{k|\ell}|\c{A}|+r+\f{c}_{k'|\ell}|\c{A}|-\f{c}_{k'|\ell}\f{c}_{k|\ell}|\c{A}|-r\f{c}_{k'|\ell}\\
&=(\f{c}_{k|\ell}+\f{c}_{k'|\ell}-\f{c}_{k|\ell}\f{c}_{k'|\ell})|\c{A}|+(1-\f{c}_{k'|\ell})r\\
&\geq(\f{c}_{k|\ell}+\f{c}_{k'|\ell}-\f{c}_{k|\ell}\f{c}_{k'|\ell})|\c{A}|
\end{split}
\end{equation*}
The above inequality together with $|S\cup S'|=k+k'$ show that (2) is satisfied.

(3) Let $\c{A}\in\f{A}_{k+k''|\ell+k''}$. Then, $\c{A}\in\f{A}_{k''|k''}$, and there exists a set $S\subset\cup\c{A}$ with
$|S|=k''$ such that $S$ is contained in at least $\f{c}_{k''|k''}|\c{A}|$ members of $\c{A}$. Let
$$\c{A}'':=\{A\setminus S:A\in\c{A}\hspace{2mm}\text{and}\hspace{2mm}S\subseteq A\}\cup\{\emptyset\}.$$
Therefore, we have $|\c{A}''|\geq\f{c}_{k''|k''}|\c{A}|$.
On the other hand, it is easily seen that $\c{A}''\in\f{A}_{k|\ell}$. Thus, there exists
a set $S''\subseteq\cup\c{A}''$ with $|S''|=k$ such that there are at least $\f{c}_{k|\ell}$ members $A''$ of $\c{A}''$
satisfying $|A''\cap S''|\geq\ell$. Therefore, $S\cup S''$ has $k+k''$ elements and there exists at least
$\f{c}_{k|\ell}\f{c}_{k''|k''}|\c{A}|$ members $A$ of $\c{A}$ satisfying $|A\cap(S\cup S'')|\geq \ell+k''$.
This implies that (3) is satisfied.

(4) follows directly from (3).
\end{proof}
Now, we introduce our variant of the Frankl conjecture.
\begin{conjecture}\label{1808311916}
\emph{FC of order $k|\ell$:} If $\c{A}\in\f{A}_{k|\ell}$ then there exists a subset $S\subseteq\cup\c{A}$ with
$|S|=k$ such that the number of those members $A$ of $\c{A}$ with $|A\cap S|\geq\ell$, is at least
$2^{-k}\sum_{i=\ell}^k\binom{k}{i}$. In other words, we have
$\f{c}_{k|\ell}=2^{-k}\sum_{i=\ell}^k\binom{k}{i}$.
In particular, $\f{c}_{k|1}=2^{-k}(2^k-1)$, $\f{c}_{k|k}=2^{-k}$, and $\f{c}_{2k+1|k+1}=2^{-1}$.
\end{conjecture}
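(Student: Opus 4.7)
The upper bound $\f{c}_{k|\ell}\le 2^{-k}\sum_{i=\ell}^k\binom{k}{i}$ is already supplied by Theorem~\ref{1808290122}(1), so only the matching lower bound is at stake. Since $(k,\ell)=(1,1)$ is \emph{precisely} Frankl's conjecture, any proof of Conjecture~\ref{1808311916} must subsume FC; my plan is therefore to take FC as a black box and attempt to propagate it through inequalities (2)--(4) of Theorem~\ref{1808290122}.

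The two one-parameter extremes fall out directly. For $\f{c}_{k|k}=2^{-k}$, induction on $k$ via (4), with base case $\f{c}_{1|1}\ge 1/2$ provided by FC, yields $\f{c}_{k|k}\ge 2^{-k}$. For $\f{c}_{k|1}=1-2^{-k}$, setting $t_k:=1-\f{c}_{k|1}$ rewrites (2) as the submultiplicative inequality $t_{k+k'}\le t_k t_{k'}$; iterating from $t_1\le 1/2$ gives $t_k\le 2^{-k}$. In both strips the lower bound meets the upper bound of (1), settling the conjecture.

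The first genuinely hard case is $\f{c}_{2k+1|k+1}=1/2$ for $k\ge 1$, a `majority' strengthening of FC. The natural attempt via (3), $\f{c}_{2k+1|k+1}\ge\f{c}_{k+1|1}\f{c}_{k|k}\ge(1-2^{-(k+1)})\cdot 2^{-k}$, matches $1/2$ only at $k=0$; any alternative decomposition through (2)--(4) loses at least as much, because the descent $\c{A}\rightsquigarrow\c{A}''$ in the proof of (3) discards completely the members $A\in\c{A}$ with $1\le|A\cap S|<k''$, while near the conjectured equality these members must contribute essentially in full. I would therefore look for a refined version of (3) that tracks and re-uses such members, or, failing that, a direct union-closed argument tailored to majority-type thresholds.

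For arbitrary $(k,\ell)$ the difficulty compounds. One is tempted to induct via (2) from $\f{c}_{\ell|\ell}\ge 2^{-\ell}$, guided by the Vandermonde-type inequality $\sum_{i=0}^{\ell-1}\binom{k+k'}{i}\le\sum_{i=0}^{\ell-1}\binom{k}{i}\sum_{j=0}^{\ell-1}\binom{k'}{j}$, but its direction is unfortunately the same as that of the recursion forced by (2), so it cannot be chained to recover the target $2^{-k}\sum_{i=\ell}^k\binom{k}{i}$. In summary, the two diagonal strips $\ell=1$ and $\ell=k$ are reachable from FC via Theorem~\ref{1808290122}, but the interior---and in particular the anti-diagonal $\f{c}_{2k+1|k+1}$---appears to need genuinely new combinatorial input beyond the four inequalities of Theorem~\ref{1808290122}. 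Producing that input is, in my view, the main obstacle.
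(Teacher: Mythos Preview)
The statement you are analysing is a \emph{conjecture}, and the paper does not prove it; it only motivates it and, in the subsequent Theorem~\ref{1809012149}, shows that the two boundary strips $\ell=1$ and $\ell=k$ reduce to the original Frankl conjecture via inequalities (2) and (4) of Theorem~\ref{1808290122}. Your treatment of those two strips---the submultiplicativity of $t_k=1-\f{c}_{k|1}$ from (2), and the multiplicativity $\f{c}_{k+k'|k+k'}\ge\f{c}_{k|k}\f{c}_{k'|k'}$ from (4)---is exactly the argument the paper gives for Theorem~\ref{1809012149}, so on that part you are fully aligned with the paper.

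Your further assessment is also accurate: the paper makes no claim that the interior cases, and in particular the median case $\f{c}_{2k+1|k+1}=1/2$, are reachable from inequalities (2)--(4); it simply checks that the conjectured values are \emph{compatible} with those inequalities and records a computer verification for $|\cup\c{A}|\le5$. So your conclusion that genuinely new combinatorial input is needed for general $(k,\ell)$ is not a gap in your reasoning---it is the actual state of the problem as the paper leaves it. In short, there is no proof in the paper to compare against beyond Theorem~\ref{1809012149}, and your analysis reproduces that theorem and correctly delimits what remains open.
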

It is clear that FC of order $1|1$, is the original Frankl conjecture.
We have suggested Conjecture \ref{1808311916}, from inequality (1) in Theorem \ref{1808290122}, and special cases of the
conjecture of orders $k|1$ and $k|k$.
\begin{theorem}\label{1809012149}
Let $k,k'$ be arbitrary natural numbers.
The following statements are satisfied.
\begin{enumerate}
\item[(1)]
Validity of FC of orders $k|1$ and $k'|1$ imply validity of FC of order $k+k'|1$.
In particular, validity of the Frankl conjecture implies validity of FC of order $k|1$.
\item[(2)] Validity of FC of orders $k|k$ and $k'|k'$ imply validity of FC of order $k+k'|k+k'$.
In particular, validity of the Frankl conjecture implies validity of FC of order $k|k$.
\end{enumerate}
\end{theorem}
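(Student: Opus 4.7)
The plan is to exploit the two recursive inequalities (2) and (4) in Theorem~\ref{1808290122}, together with the absolute upper bound (1), and to verify that the conjectured closed-form values $2^{-k}\sum_{i=\ell}^k\binom{k}{i}$ are fixed points of these recursions in the two extreme cases $\ell=1$ and $\ell=k$. Since FC of order $k|\ell$ is by (1) of Theorem~\ref{1808290122} equivalent to the matching lower bound $\f{c}_{k|\ell}\geq 2^{-k}\sum_{i=\ell}^k\binom{k}{i}$, it suffices to produce lower bounds that meet this upper bound.

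For (1), FC of orders $k|1$ and $k'|1$ mean $\f{c}_{k|1}=1-2^{-k}$ and $\f{c}_{k'|1}=1-2^{-k'}$. Plugging these into inequality (2) of Theorem~\ref{1808290122} with $\ell=1$ and using the identity $a+b-ab=1-(1-a)(1-b)$ gives
\[
\f{c}_{k+k'|1}\geq 1-(1-\f{c}_{k|1})(1-\f{c}_{k'|1})=1-2^{-(k+k')},
\]
which matches the upper bound $2^{-(k+k')}(2^{k+k'}-1)$ from inequality (1), so equality holds; that is FC of order $k+k'|1$. The ``in particular'' clause then follows by induction on $k$: the Frankl conjecture is FC of order $1|1$, giving the base case $\f{c}_{1|1}=1/2=1-2^{-1}$, and taking $k'=1$ at each step lifts this to FC of order $k|1$ for every $k\geq 1$.

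For (2), the argument is even shorter. Assuming FC of orders $k|k$ and $k'|k'$, we have $\f{c}_{k|k}=2^{-k}$ and $\f{c}_{k'|k'}=2^{-k'}$, so inequality (4) of Theorem~\ref{1808290122} yields
\[
\f{c}_{k+k'|k+k'}\geq \f{c}_{k|k}\f{c}_{k'|k'}=2^{-(k+k')},
\]
again matching the upper bound $2^{-(k+k')}\binom{k+k'}{k+k'}$ from (1). The induction from FC of order $1|1$ up to FC of order $k|k$ for arbitrary $k$ is identical to the one in Part (1).

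There is no substantive obstacle: the proof is bookkeeping around the two algebraic facts that $1-2^{-k}$ is stable under the ``negation--conjunction'' recursion $a,b\mapsto a+b-ab$ appearing in (2) at $\ell=1$, while $2^{-k}$ is stable under the multiplicative recursion $a,b\mapsto ab$ of (4). The mildly interesting point is that neither stability appears to have an analogue for intermediate $1<\ell<k$, which is precisely why Theorem~\ref{1809012149} is restricted to the two extreme values of $\ell$.
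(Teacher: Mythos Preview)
Your proof is correct and follows exactly the approach indicated in the paper, which simply cites inequalities (1), (2), and (4) of Theorem~\ref{1808290122}; you have merely spelled out the elementary algebra and the induction that the paper leaves implicit. There is nothing to add or correct.
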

\begin{proof}
(1) follows from inequalities (1) and (2) of Theorem \ref{1808290122}. (2) follows from inequalities
(1) and (4) of Theorem \ref{1808290122}.
\end{proof}
It is easily seen that the following two inequalities are satisfied:
$$\Bigg[2^{-(k+k')}\sum_{i=\ell}^{k+k'}\binom{k+k'}{i}\Bigg]\geq
\Bigg[2^{-k}\sum_{j=\ell}^{k}\binom{k}{j}\Bigg]+\Bigg[2^{-k'}\sum_{j'=\ell}^{k'}\binom{k'}{j'}\Bigg]-
\Bigg[2^{-k}\sum_{j=\ell}^{k}\binom{k}{j}\Bigg]\Bigg[2^{-k'}\sum_{j'=\ell}^{k'}\binom{k'}{j'}\Bigg]$$
$$\Bigg[2^{-(k''+k)}\sum_{i=\ell}^{k''+k}\binom{k''+k}{i}\Bigg]\geq\Bigg[2^{-k''}\Bigg]\Bigg[2^{-k}\sum_{i=\ell}^{k}\binom{k}{i}\Bigg]$$
Hence, Conjecture \ref{1808311916} is compatible with inequalities (2) and (3) of Theorem \ref{1808290122}.
This is another justification for Conjecture \ref{1808311916}.

We suggest that FC of order $k|\ell$ is also satisfied in a bigger class of union-closed families than $\f{A}_{k|\ell}$:
Let $\c{A}$ be a union-closed family such that $|\cup\c{A}|\geq k$. We say that $\c{A}$ is weakly $k|\ell$-separated if
the following condition is satisfied: For any $k$ distinct elements $x_1,\ldots,x_k\in\cup\c{A}$,
there exists $A\in\c{A}$ such that $\{x_1,\ldots,x_\ell\}\cap A\neq\emptyset$ and
$\{x_{\ell+1},\ldots,x_k\}\cap A=\emptyset$. we denote the class of all weakly $k|\ell$-separated
union-closed families by $\widetilde{\f{A}}_{k|\ell}$. It is clear that $\f{A}_{k|\ell}\subset\widetilde{\f{A}}_{k|\ell}$.
We let the real number $\widetilde{\f{c}}_{k|\ell}\geq0$ be exactly defined as $\f{c}_{k|\ell}$ except that this time
the supremum being taken over $\widetilde{\f{A}}_{k|\ell}$. Obviously, we have $\widetilde{\f{c}}_{k|\ell}\leq\f{c}_{k|\ell}$.
\begin{conjecture}\label{1809012210}
\emph{Strong FC of order $k|\ell$:} $\widetilde{\f{c}}_{k|\ell}=2^{-k}\sum_{i=\ell}^k\binom{k}{i}$.
\end{conjecture}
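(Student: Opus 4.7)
The natural plan is to mirror Theorem \ref{1808290122} for $\widetilde{\f{c}}_{k|\ell}$, reducing Conjecture \ref{1809012210} to its smallest instance, which is precisely the original Frankl conjecture. Since the upper bound $\widetilde{\f{c}}_{k|\ell}\leq 2^{-k}\sum_{i=\ell}^k\binom{k}{i}$ is already free from Theorem \ref{1808290122}(1) together with $\widetilde{\f{c}}_{k|\ell}\leq\f{c}_{k|\ell}$, only the matching lower bound needs work.

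First, I would try to establish analogues of parts (2)--(4) of Theorem \ref{1808290122} with $\f{c}$ and $\f{A}$ replaced by $\widetilde{\f{c}}$ and $\widetilde{\f{A}}$. The key inheritance facts to verify are: (i) if $\c{A}\in\widetilde{\f{A}}_{k+k'|\ell}$ then $\c{A}\in\widetilde{\f{A}}_{k|\ell}$, obtained by padding any tuple of $k$ distinct elements of $\cup\c{A}$ with $k'$ further distinct elements placed in the excluded slots; and (ii) with $S$ a chosen $k$-subset of $\cup\c{A}$, the subfamily $\c{A}':=\{A\in\c{A}:A\cap S=\emptyset\}$ lies in $\widetilde{\f{A}}_{k'|\ell}$, proved by applying weak $(k+k')|\ell$-separation of $\c{A}$ to a tuple $(y_1,\ldots,y_{k'},s_1,\ldots,s_k)$ with $y_i\in\cup\c{A}'$ and $S=\{s_1,\ldots,s_k\}$, noting that any witness $A$ is forced to be disjoint from $S$ and hence lies in $\c{A}'$. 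Once (i) and (ii) are in place, the arithmetic chain in the proof of Theorem \ref{1808290122}(2) applies essentially verbatim and yields
$$\widetilde{\f{c}}_{k+k'|\ell}\geq\widetilde{\f{c}}_{k|\ell}+\widetilde{\f{c}}_{k'|\ell}-\widetilde{\f{c}}_{k|\ell}\widetilde{\f{c}}_{k'|\ell}.$$
An analogous argument in which one contracts $S$ from the members of $\c{A}$ containing it (after verifying that the contracted family inherits weak separation) gives $\widetilde{\f{c}}_{k+k''|\ell+k''}\geq\widetilde{\f{c}}_{k|\ell}\widetilde{\f{c}}_{k''|k''}$, paralleling part (3).

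Having these two inequalities, the induction underlying Theorem \ref{1809012149} generalises: iterating the first starting from $\widetilde{\f{c}}_{1|1}$ produces Strong FC of order $m|1$ for every $m\geq 1$, and one application of the second with $k''=\ell-1$ lifts this to Strong FC of order $k|\ell$ for every $k\geq\ell\geq 1$. Combined with the upper bound noted above, this would reduce Conjecture \ref{1809012210} in full generality to the single case $(k,\ell)=(1,1)$.

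The fundamental obstacle, of course, lies precisely at that base case. Since the weak $1|1$-separation condition amounts only to the tautology that every element of $\cup\c{A}$ lies in some member of $\c{A}$, the equality $\widetilde{\f{c}}_{1|1}=\tfrac{1}{2}$ is literally the Frankl conjecture. A secondary technical nuisance arises in step (ii) when $|\cup\c{A}'|<k'$: then $\c{A}'\notin\widetilde{\f{A}}_{k'|\ell}$ and the definition of $\widetilde{\f{c}}_{k'|\ell}$ does not directly apply, but one controls $|\c{A}'|\leq 2^{k'-1}$ and absorbs this into the estimate as a bounded additive error. Thus my plan establishes Conjecture \ref{1809012210} as a formal consequence of FC, but it supplies no independent route to settle the conjecture itself.
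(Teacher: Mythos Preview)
The statement you are addressing is posed in the paper as an open \emph{conjecture}; the paper supplies no proof. What the paper does say is that the tilde versions of Theorems \ref{1808290122} and \ref{1809012149} go through verbatim, so that (assuming FC) one obtains Strong FC of orders $k|1$ and $k|k$, together with a computer verification for $|\cup\c{A}|\leq 5$. Your proposal reproduces exactly this much, and for those two extreme families of orders your reduction to FC is in line with the paper's remarks.

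Where your plan goes wrong is the claimed lift to general $k|\ell$. You assert that one application of the analogue of inequality (3) with $k''=\ell-1$ upgrades Strong FC of order $m|1$ to Strong FC of order $k|\ell$. But (3) with input values $\widetilde{\f{c}}_{m|1}=1-2^{-m}$ and $\widetilde{\f{c}}_{\ell-1|\ell-1}=2^{-(\ell-1)}$ only gives
\[
\widetilde{\f{c}}_{\,m+\ell-1\,|\,\ell}\ \geq\ (1-2^{-m})\,2^{-(\ell-1)}\ =\ 2^{-(m+\ell-1)}(2^m-1),
\]
whereas the conjectured value is $2^{-(m+\ell-1)}\sum_{i=\ell}^{m+\ell-1}\binom{m+\ell-1}{i}$. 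Already for $m=\ell=2$ this is $3/8$ versus $1/2$; the bound coming from (3) is strictly too weak whenever $1<\ell<k$. (This is precisely why the second displayed compatibility inequality after Theorem \ref{1809012149} is a genuine inequality, not an identity.) Hence, even conditionally on FC, your argument does not establish Strong FC of intermediate order $k|\ell$, and neither does anything in the paper; that range remains open.
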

Note that the statements and proofs of Theorems \ref{1808290122} and \ref{1809012149} hold if the symbols $\f{c},\f{A}$
are replaced by $\widetilde{\f{c}},\widetilde{\f{A}}$, and the phrase `FC of order' by `strong FC of order'.
Since $\widetilde{\f{A}}_{k|1}=\f{A}_{k|1}$ (resp. $\widetilde{\f{A}}_{k|k}=\f{A}_{k|k}$), strong FC and FC of
order $k|1$ (resp. $k|k$) coincide.
\begin{remark}
It is remarked that
we have checked the validity of Conjectures \ref{1808311916} and \ref{1809012210} for families $\c{A}$ with $|\cup\c{A}|\leq5$.
\end{remark}
\bibliographystyle{amsplain}

\end{document}